\newtheorem{thm}{Theorem}%[section] (If you want theorem numbered
\newtheorem{prop}[thm]{Proposition} %--> \begin\end{theorem,lemma,...}
\newtheorem{defn}[thm]{Definition}
\newcommand{\des}{\displaystyle}
\date{}
\begin{document}
\setlength{\baselineskip}{16pt}
\title{On the Gaussian $q$-Distribution}
\author{Rafael D\'\i az
and Eddy Pariguan}
\maketitle

\begin{abstract}

We present a study of the Gaussian $q$-measure introduced by
D\'iaz and Teruel from a probabilistic and from a combinatorial
viewpoint.  A main motivation for the introduction of the Gaussian $q$-measure
is that its moments are exactly the $q$-analogues of the double factorial numbers.  We show that the Gaussian $q$-measure interpolates
between the uniform measure on the interval $[-1,1]$ and the
Gaussian measure on the real line.
\end{abstract}

\section{Introduction}

{\noindent}The main goal of this work is to describe explicitly the Gaussian
$q$-measure and show that it fits into a diagram
\[ \xymatrix @C=.6in
 {\mbox{ Lebesgue on } [-1,1] &  \mbox{Gaussian $q$-measure on } [-\nu, \nu ] \ar[r]^{\ \ \ \ \ \ \ \ q \rightarrow 1 }
 \ar[l]_{q \rightarrow 0 \ \ \ \ \ \ }&  \mbox{ Gaussian on } \mathbb{R},}
 \]
where $\nu=\nu(q)=\frac{1}{\sqrt{1-q}}.$
That is we are going to construct a $q$-analogue for the Gaussian
measure and show that as $q$ moves from $0$ to $1$ the Gaussian
$q$-measure interpolates, in the appropriated sense, from the
uniform measure on the interval $[-1,1]$ to the normal measure on
the real line. If we think of the parameter $q$ as time, we
see that the Gaussian $q$-measure provides a transition from the
uniform distribution on the interval $[-1,1]$ to the
normal distribution centered at the origin, so it
describes a process of specialization at the  origin with a
simultaneous spread of probabilities towards infinity.\\

{\noindent}Let us make a couple of  remarks about terminology. We
shall use $q$-density, $q$-distribution, etc, to refer to the
$q$-analogues of the corresponding classical notions. The point to
keep in mind is that we always replace Lebesgue measure $dx$ by the
Jackson $q$-measure $d_qx.$ Unfortunately, to our knowledge, there
is not available axiomatic definition for the later object. So, to
that extent, our terminology should be taken heuristically. The
problem of justifying axiomatically the terminology used, although
of great value for understanding the foundations of our approach to
the Gaussian $q$-distribution, will not be further discussed in this
work. Next we remark that the object of study of this work -- the
Gaussian $q$-measure -- is not the same, despite the choice of name,
as the $q$-Gaussian measures that have been studied in the
literature. As far as we know there are two different distributions
that are called the $q$-Gaussian distribution. One of them was
introduced by Tsallis et all in \cite{pra, tsa10},  and has been developed in many
works, see the book \cite{GT} and the references therein. That
construction is motivated by the fact that the $q$-Gaussian
distribution is the maximum entropy distribution with prescribed
mean and dispersion for the so called Tsallis or extended entropy \cite{tsa1};
also the $q$-Gaussian distribution is an exact stable solution of
the nonlinear Fokker-Planck equation \cite{plas, tsa5}. Recently, a central
limit theorem involving the $q$-Gaussian measure has been proven by Umarov,
Tsallis and Steinberg \cite{U}. The other definition has been
studied by several researchers in various works such as \cite{bo,
br3,bryc, Lee}. This type of $q$-Gaussian measure is motivated by
the fact that it is the orthogonal measure associated with a certain
family of polynomials called the $q$-Hermite polynomials. A key
fact is that, in both cases, the $q$-Gaussian measure is a piecewise
absolutely continuous measure with respect to the Lebesgue measure;
in contrast the Gaussian $q$-measure studied in this work is
piecewise absolutely continuous with respect to the Jackson $q$-measure,
i.e. we are not just changing the density to be integrated, we are
simultaneously changing the very notion of integration. Our
generalization is motivated mainly by the fact it yields the right
moments, i.e. the  moments of the Gaussian $q$-measure are the
$q$-analogues of the Pochhammer $2$-symbol, as one may expect
\cite{DP, CTT}.

\section{Gaussian $q$-measure}

The construction of the $q$-analogue of the Gaussian
measure introduced in \cite{CTT} and further studied in \cite{ED2,
ED1} requires only a few basic notions from $q$-calculus \cite{AND,
RA, Ch, HY}. Fix a real number $0 \leq q < 1$.  The $q$-derivative
of a map $f:\mathbb{R} \longrightarrow \mathbb{R}$ at $x \in
\mathbb{R}\setminus \{0\}$ is given by
$$\des{\partial_{q}f(x)=\frac{f(qx)-f(x)}{(q-1)x}}.$$
Notice that for $q=0$, a case often ruled out in the
literature, one gets that:
$$\des{\partial_{0}f(x)=\frac{f(x)-f(0)}{x}}.$$

For an integer $n \geq 1$ we have that $\des{\partial_{q}x^{n} =
[n]_{q}x^{t-1}}$ where
$\des{[n]_{q}=\frac{q^{n}-1}{q-1}=1+q+...+q^{n-1}}$. Inductively one
can show that
$$\partial_{q}^nx^{n}=[n]_{q}[n-1]_{q}[n-2]_{q}...[2]_q=[n]_{q}!=(1-q)^{-n} \prod_{i=1}^n(1-q^i)
=\frac{(1-q)_q^{n}}{(1-q)^{n}}, $$ where we have made use of the
notation
$$(a + b)_q^n=\prod_{i=0}^{n-1}(a + q^i b).$$

A right inverse for the $q$-derivative  is obtained via
the Jackson integral or $q$-integral. For $a,b \in \mathbb{R},$ the
Jackson or $q$-integral of $f: \mathbb{R} \longrightarrow
\mathbb{R}$ on $[a,b]$ is given by
$$\int_{a}^{b}f(x)d_qx=(1-q)\sum_{n=0}^{\infty}q^n(bf(q^nb)-af(q^na)).$$

Notice that for good enough functions if one lets $q$ approach $1$
then the $q$-derivative approach the Newton derivative, and the
Jackson integral approach the Riemann integral. Note also that for
$q=0$ we get that:
$$\int_{a}^{b}f(x)d_0x=bf(b)-af(a).$$
  It is easy to show that
$q$-integration has the following properties.
\begin{prop}{\em For $a,b,c \in \mathbb{R}$ the following
identities hold:
$$\begin{array}{ll}
 1.\displaystyle \int_{0}^{b}f(x)d_qx =
(1-q)b\sum_{n=0}^{\infty}q^nf(q^nb). & 2.\displaystyle
\int_{a}^{b}f(x)d_qx =
-\int_{b}^{a}f(x)d_qx. \\
 3. \displaystyle \int_{ac}^{bc}f(x)d_qx =
c\int_{b}^{a}f(cx)d_qx. & 4.\displaystyle
\int_{-b}^{0}f(x)d_qx=\int_{0}^{b}f(-x)d_qx. \\
  5.\displaystyle \int_{a}^{c}f(x)d_qx= \int_{a}^{b}f(x)d_qx +
\int_{b}^{c}f(x)d_qx. & 6.\displaystyle \int_{-b}^{b}f(x)d_qx=
\int_{0}^{b}(f(x) + f(-x))d_qx.
\end{array}$$}
\end{prop}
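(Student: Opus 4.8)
The plan is to derive all six identities directly from the defining series
$$\int_a^b f(x)\, d_q x = (1-q)\sum_{n=0}^\infty q^n\bigl(b f(q^n b) - a f(q^n a)\bigr),$$
treating the two endpoint contributions as the basic building blocks. Throughout I would assume that $f$ is such that the relevant series converge absolutely, so that rearranging and splitting sums is legitimate; flagging this hypothesis is the only genuinely non-formal point, and I return to it at the end.

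First I would dispose of the identities that are immediate from the definition. Setting $a=0$ annihilates every term $a f(q^n a)$ and yields identity 1. Interchanging the roles of $a$ and $b$ in the defining sum negates each summand, giving identity 2. Identity 3 is the change-of-variables rule under the scaling $x \mapsto cx$: substituting into the defining series and comparing termwise relates $\int_{ac}^{bc} f(x)\, d_q x$ to the $c$-scaled Jackson integral of the map $x \mapsto f(cx)$.

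The real engine for the remaining identities is the decomposition
$$\int_a^b f(x)\, d_q x = \int_0^b f(x)\, d_q x - \int_0^a f(x)\, d_q x,$$
which I would obtain by splitting the single defining series into the difference of the two series furnished by identity 1. Granting this, identity 5 is purely formal: writing $\int_a^b f + \int_b^c f$ as $(\int_0^b f - \int_0^a f) + (\int_0^c f - \int_0^b f)$ telescopes to $\int_0^c f - \int_0^a f = \int_a^c f$. For identity 4 I would combine identities 1 and 2: by identity 2 one has $\int_{-b}^0 f = -\int_0^{-b} f$, and evaluating $\int_0^{-b} f$ through identity 1 produces exactly $-\int_0^b f(-x)\, d_q x$, so the two sign changes cancel. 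Finally identity 6 drops out by applying identity 5 with intermediate point $0$, rewriting $\int_{-b}^0 f$ via identity 4, and using the evident termwise linearity of the series to combine $\int_0^b f(x)\, d_q x + \int_0^b f(-x)\, d_q x$ into $\int_0^b (f(x)+f(-x))\, d_q x$.

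The only place demanding care — and hence the hard part — is not any single algebraic step but the justification that the defining series converge and may be split and reindexed; the decomposition $\int_a^b = \int_0^b - \int_0^a$ in particular requires each of the two series on the right to converge on its own, not merely their formal difference. Under a mild boundedness or growth hypothesis on $f$ near the origin, ensuring that $q^n f(q^n x) \to 0$ fast enough, absolute convergence holds and every manipulation above is valid; I would state this hypothesis once at the outset rather than repeat it in each item.
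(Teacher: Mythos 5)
Your proof is correct and is exactly the direct verification from the defining series that the paper intends: the paper offers no proof at all, asserting only that ``it is easy to show'' these properties, and your one non-formal point --- that the decomposition $\int_a^b = \int_0^b - \int_0^a$ requires each series to converge separately, not just their formal difference --- is the right hypothesis to flag. One remark: your termwise computation in item 3 actually yields $\int_{ac}^{bc} f(x)\,d_qx = c\int_a^b f(cx)\,d_qx$, whereas the proposition as printed has the limits $a$ and $b$ transposed, which is a sign-changing typo (take $f\equiv 1$, $a=0$, $b=1$, $c>0$ to see the printed version fails); your argument establishes the corrected statement, which is also the version the paper implicitly relies on when it later asserts $m_q[ca,cb]=c\,m_q[a,b]$.
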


The identities above show the similitude between the
Riemann and Jackson integrals. However the reader should be aware of
the sharp distinctions between them. Notice that the
$q$-integral of a function $f$ on an interval $[a,b]$ depends on the
values of $f$ on the interval $[0,b]$. Consider the $q$-measure of
the interval $[a,b]$; by definition it is given by
$$m_q[a,b]=\int_{a}^{b}1_{[a,b]}d_qx=(b-a)+qa-q^lb, $$
where $l$ is the smallest integer such that $q^l < \frac{b}{a}.$
Note that for $q=0$ we get that $$m_0[a,b]=b-a.$$ Therefore, for
intervals, $m_0$ agrees with the Lebesgue measure. One can check
that $m_q$ is additive, i.e. if $a<b<c<d$ then
$$m_q([a,b] \sqcup [c,d])=m_q[a,b] + m_q[c,d],$$ and
also that $m_q$ is well-behaved under re-scalings, i.e. for $c >0$ we
have that
$$m_q[ca,cb]=cm_q[a,b] .$$
However the measures $m_q$ for $0<q<1$ fail to be translation
invariant, indeed we have that:
$$m_q[a+c,b+c]=m_q[a,b] + c(q-q^l).$$

In order to find the $q$-analogue of the Gaussian measure
we should find $q$-analogues for the main characters appearing in
the Gaussian measure, namely:
$$\sqrt[]{2\pi}, \  \infty, \ e^{-\frac{x^2}{2}}, \ x^n, \ dx.$$
The Lebesgue measure $dx$ is replaced by the Jackson $q$-measure $d_{q}x$.
The monomial $x^{n}$ remains unchanged. The $q$-analogue of
$e^{-\frac{x^2}{2}}$ is constructed in several steps. The
$q$-analogue of the exponential function $e^x$ is
$$e_{q}^x = \sum_{n=0}^{\infty}\frac{x^n}{[n]_{q}!}= \sum_{n=0}^{\infty}\frac{(1-q)^n }{(1-q)_q^{n}}x^n.$$

The function $e_{q}^x$ is such that  $e_q^0=1$ and $\partial_q e_q^x = e_q^x$.  Notice that the $q$-exponential $e_{q}^x$ interpolates
between $\frac{1}{1-x}$ as $q$ approaches $0$, and $e^x$ as $q$
approaches $1$; thus the $q$-exponential $e_{q}^x$ provides a
transition from the hyperbolic to the exponential regime. This
procedure is illustrated in Figure
\ref{exp1 tres dim} which shows how  $e_{q}^x$ changes as
$q$ varies.

\begin{figure}[h!]
  \centering
    \includegraphics[width=.8\textwidth]{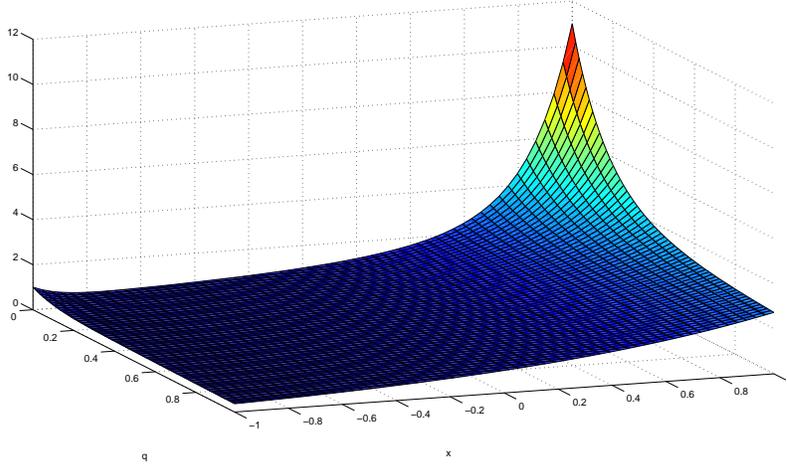}
    \caption{Plot of $e_{q}^x$ as function of $q$ and $x$.}
    \label{exp1 tres dim}
\end{figure}

The $q$-analogue of the identity $e^{x}e^{-x}=1$ is
$e_{q}^{x}E_{q}^{-x}=1$, where the function $E_{q}^{x}$ is given by
$$E_{q}^{x}= \sum_{n=0}^{\infty}q^{\frac{n(n-1)}{2}}\frac{x^n}{[n]_{q}!}=
\sum_{n=0}^{\infty}q^{\frac{n(n-1)}{2}}\frac{(1-q)^n
}{(1-q)_q^{n}}x^n.$$

The function $E_{q}^x$  is such that
$E_{q}^0=1$ and $\partial_qE_q^x=E_q^{qx}$. It is easy to see that
$E_{q}^x$ approaches $1+x$ as $q$ goes to $0$, and approaches $e^x$
as $q$ approaches to $1$; thus the $q$-exponential $E_{q}^x$
provides a transition from the linear to the exponential regime.
This interpolation is shown in Figure \ref{exp2 tres dim} which
shows how the graph of $E_{q}^x$ changes as $q$ varies.

\begin{figure}[h!]
  \centering
    \includegraphics[width=.8\textwidth]{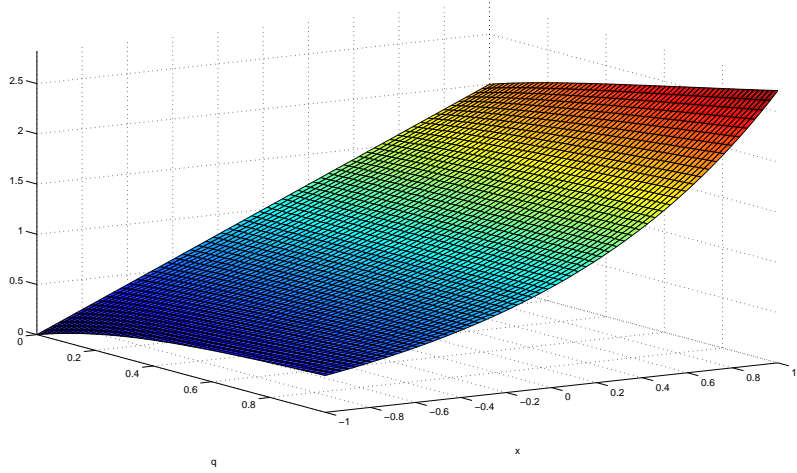}
    \caption{Plot of $E_{q}^x$ as a function of $q$ and $x$ .}
    \label{exp2 tres dim}
\end{figure}
\newpage
Finding the right $q$-analogue for $e^{-\frac{x^2}{2}}$
is a bit tricky. With hindsight we know that it is given by:
$$ E_{q^{2}}^{-\frac{q^{2}x^2}{[2]_q}}=
\sum_{n=0}^{\infty}\frac{(-1)^{n}q^{n(n+1)} }{(1+q)^{n}[n]_{q^{2}!}}x^{2n}=
\sum_{n=0}^{\infty}\frac{q^{n(n+1)} (q-1)^n  }{  (1-q^2)_{q^2}^{n} }x^{2n}.$$

Perhaps the most delicate issue is finding the
$q$-analogues for the integration limits. Remarkably the
$q$-analogue of an improper integral is a proper integral with
limits $-\nu$ and $\nu$ where
$$\nu=\nu(q)=\frac{1}{\sqrt{1-q}}.$$

Notice that $\nu$ approaches $1$ as
$q$ goes to $0$ and approaches $\infty$ as $q$ goes to $1$. The
normalization factor is also delicate. It turns out that the
$q$-analogue $c(q)$ of $\sqrt{2\pi}$ is given by

$$c(q) = \int_{-\nu}^{\nu} E_{q^2}^{\frac{-q^2x^2}{[2]_q}}d_qx =
2\int_{0}^{\nu} E_{q^{2}}^{\frac{-q^2x^2}{[2]_q}}d_qx =
2(1-q)\nu\sum_{n=0}^{\infty}q^n
E_{q^{2}}^{\frac{-q^2(q^n\nu)^2}{[2]_q}}, $$ or equivalently
$$c(q) = 2(1-q)^{\frac{1}{2}}\sum_{m=0}^{\infty}
\frac{(-1)^{m}q^{m(m+1)}}{(1-q^{2m+1})(1-q^2)_{q^2}^{m}}.$$
Note that $c(0)=2$ and that $c(q)$ approaches $\sqrt{2\pi}$ as $q$
goes to $1$; one may think of $\des \frac{c(q)^2}{2}$ as a being a
$q$-analogue for $\pi$, indeed one gets the following remarkably
identity
$$\pi = 2  \lim_{q \rightarrow 1}
\left( \sum_{m=0}^{\infty}
\frac{(-1)^{m}(1-q)^{\frac{1}{2}}q^{m(m+1)}}{(1-q^{2m+1})(1-q^2)_{q^2}^{m}}\right)^2.$$
The graph of $c(q)$ as a function of $q$ is shown in Figure
\ref{fig:cq}.

\begin{figure}[h!]
  \centering
    \includegraphics[width=.5\textwidth]{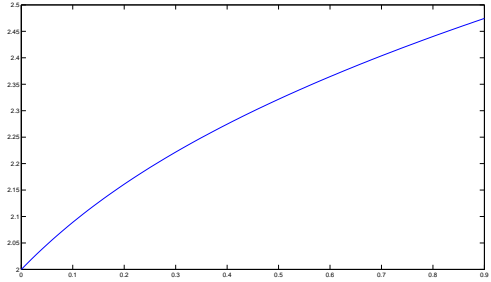}
    \caption{Plot of $c(q)$ as a function of  $q$. }
    \label{fig:cq}
\end{figure}

We are ready to introduce the Gaussian $q$-density.

\begin{defn}{\em
The Gaussian $q$-density is the functions $s_q:\mathbb{R}
\longrightarrow \mathbb{R}$ is given by
\[s_q(x)=
\left\{
\begin{array}{lc}
  0 & \mbox{for}\ x < -\nu, \\
  & \\
  {\displaystyle \frac{1}{c(q)}E_{q^{2}}^{\frac{-q^2x^2}{[2]_q}}} &
  \mbox{\ for\ }-\nu
\leq x \leq \nu \\
& \\
  0 &  \mbox{for}\ x > \nu
\end{array}
\right.
\]}
\end{defn}

\begin{figure}[h!]
  \centering
    \includegraphics[width=.8\textwidth]{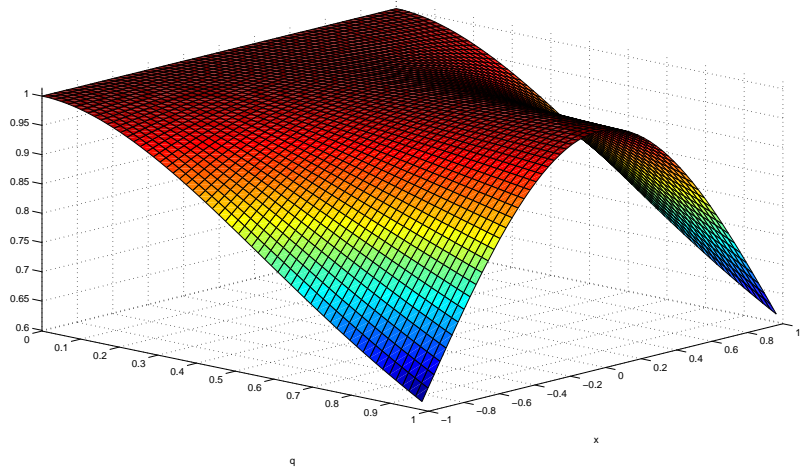}
    \caption{Plot of the Gaussian $q$-density as a function of $q$ and $x$.}
    \label{qcampana3d}
\end{figure}

\begin{thm}{\em The Gaussian $q$-density interpolates between the
uniform density on the interval $[-1,1]$ and the Gaussian density
on the real line.}
\end{thm}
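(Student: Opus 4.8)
The plan is to read ``interpolates'' as pointwise convergence of the density $s_q$: as $q \to 0$ one should recover $s_0(x) = \tfrac12\, 1_{[-1,1]}(x)$, the uniform density on $[-1,1]$, and as $q \to 1$ one should recover $\tfrac{1}{\sqrt{2\pi}}\, e^{-x^2/2}$, the standard Gaussian density. Three ingredients of $s_q$ vary with $q$: the endpoints $\pm\nu$ of the support, the normalization $c(q)$, and the core function $g_q(x) := E_{q^2}^{-q^2 x^2/[2]_q}$. For the first two I would simply invoke the facts recorded above: $\nu \to 1$ as $q \to 0$ and $\nu \to \infty$ as $q \to 1$, so the support collapses to $[-1,1]$ in the first limit and fills $\mathbb{R}$ in the second; and $c(0) = 2$ while $c(q) \to \sqrt{2\pi}$ as $q \to 1$. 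Everything then reduces to controlling $g_q(x)$.

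The heart of the argument is the limit of $g_q$, for which I would use the explicit power series
$$g_q(x) = \sum_{n=0}^{\infty} \frac{(-1)^n q^{n(n+1)}}{(1+q)^n [n]_{q^2}!}\, x^{2n}$$
established in the excerpt. Working term by term, as $q \to 0$ the factor $q^{n(n+1)}$ kills every $n \geq 1$ term and leaves only the constant $1$, so $g_q(x) \to 1$ and hence $s_q(x) \to \tfrac{1}{c(0)} = \tfrac12$ on $[-1,1]$, which is exactly the uniform density. As $q \to 1$ the $n$-th coefficient tends to $\tfrac{(-1)^n}{2^n n!}$ (since $q^{n(n+1)} \to 1$, $(1+q)^n \to 2^n$, and $[n]_{q^2}! \to n!$), so termwise $g_q(x) \to \sum_n \tfrac{(-1)^n x^{2n}}{2^n n!} = e^{-x^2/2}$, whence $s_q(x) \to \tfrac{1}{\sqrt{2\pi}}\, e^{-x^2/2}$.

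The one real obstacle is justifying the interchange of the limit in $q$ with the infinite summation, since both the base $q^2$ and the argument $-q^2 x^2/[2]_q$ of the $q$-exponential move simultaneously. I would settle this with a $q$-uniform dominating series via Tannery's theorem. From $[k]_{q^2} = 1 + q^2 + \cdots + q^{2(k-1)} \geq k\, q^{2(k-1)}$ one gets
$$[n]_{q^2}! \;\geq\; n!\, q^{2\binom{n}{2}} \;=\; n!\, q^{n(n-1)},$$
and therefore, using $q^{n(n+1)} \leq 1$ and $(1+q)^{-n} \leq 1$,
$$\left| \frac{(-1)^n q^{n(n+1)}}{(1+q)^n [n]_{q^2}!}\, x^{2n} \right| \;\leq\; \frac{q^{2n}}{n!}\, x^{2n} \;\leq\; \frac{x^{2n}}{n!},$$
a bound independent of $q$ and summable to $e^{x^2}$ for each fixed $x$. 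Dominated convergence for series then legitimizes the termwise limits above in both regimes, completing the proof. I expect this domination step to be the only point requiring genuine care; the remaining identifications of the limiting support and normalization are immediate from the facts already recorded.
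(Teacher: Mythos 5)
Your proposal is correct, and it follows the same route the paper takes: pointwise convergence of $s_q$, split into the behavior of the support $[-\nu,\nu]$, the normalization $c(q)$, and the $q$-exponential factor, with the limits of the first two invoked as recorded facts. The difference is one of completeness rather than strategy: the paper's entire proof is the assertion that both limits are ``immediate from our previous remarks,'' whereas you actually justify the only step with real analytic content, namely the interchange of the limit in $q$ with the infinite sum defining $E_{q^2}^{-q^2x^2/[2]_q}$ --- a genuine issue, since the base $q^2$ and the argument $-q^2x^2/[2]_q$ vary simultaneously, so the paper's earlier pointwise remark that $E_q^y$ tends to $1+y$ or $e^y$ does not by itself apply. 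Your domination $[n]_{q^2}! \geq n!\,q^{n(n-1)}$, giving the $q$-uniform bound $x^{2n}/n!$ and allowing Tannery's theorem, is exactly the detail the paper's ``immediate'' conceals, and it is correct: each factor $[k]_{q^2}=1+q^2+\cdots+q^{2(k-1)} \geq k\,q^{2(k-1)}$, the exponents sum to $n(n-1)$, and the excess power $q^{n(n+1)}$ in the numerator then leaves $q^{2n}\leq 1$. In short, you proved what the paper only claimed, by the argument the paper presumably had in mind.
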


\begin{proof}
We must show that $s_q$ converges  to  $\frac{1}{2}1_{[-1,1]}$ as
$q$ goes to $0$, and that $s_q$ converges to
$\frac{1}{\sqrt[]{2\pi}}e^{-\frac{x^2}{2}}$ as $q$ approaches $1$.
Both results are immediate from our previous remarks.
\end{proof}

The transition of the Gaussian $q$-density from the
uniform density on $[-1,1]$ to the Gaussian density on the real line
is shown in Figure \ref{qcampana3d}.

\section{Gaussian $q$-measure and $q$-combinatorics}

The reader may be wondering about the motivation behind our
definition of the Gaussian $q$-density $s_q$. It has been
constructed so that it generalizes the fact that the Gaussian
measure provides a bridge between measure theory and
combinatorics; indeed the moments of the Gaussian measure are
given by
$$\frac{1}{\sqrt[]{2\pi}}
\int_{-\infty}^{\infty}x^{n} e^{-\frac{x^2}{2}}dx=|M[n]|,$$
where $|M[n]|$ is the cardinality of the set  $M[n]$ of matchings
on $[n]=\{1,2,...,n\},$ i.e. the number of partitions of $[n]$ in
blocks of cardinality $2$. Thus the Gaussian measure has a clear
combinatorial meaning, this fact explains the role of graphs
in the computation of Feynman integrals
\cite{RDEP}.\\

Just as the basic object of study in combinatorics
is the cardinality of finite sets, the basic object of study in
$q$-combinatorics is the cardinality of $q$-weighted sets, i.e.
pairs $(x,\omega)$ where $x$ is a finite set and $\omega:x
\longrightarrow \mathbb{N}[q]$ is an arbitrary map. The
cardinality of such a pair is given by
$$|x, \omega|=\sum_{i\in x}\omega(i).$$

Let us now describe  \cite{ED1} the interpretation in terms of
$q$-combinatorics of the Gaussian $q$-measure. A matching $m$ on
$[n]$ is a sequence $m=\{(a_1,b_1),(a_2,b_2),..., (a_n,b_n)\}$
such that $a_i<b_i$, $a_1< a_2 <
\dots < a_n$, and ${[n]=\bigsqcup\{a_i,b_i\}}.$  Next we define a $q$-weight on $M[n]$ the set of  matchings on $[n]$. For
a pair $(a_i,b_i)$ in a matching  $m$ we set $((a_i,b_i))=\{j\in
[[2n]]: a_i<j<b_i\}.$ Also for an integer $i$ we set
$B_i(m)=\{b_j: 1\leq j < i \}$. The weight $\omega(m)$ of a
matching $m$ is defined as follows:
$$\displaystyle{\omega(m)=\prod_{(a_i,b_i) \in m}q^{|((a_i,b_i)) \setminus
B_i(m)|}=q^{\sum_{(a_i,b_i) \in m}|((a_i,b_i)) \setminus
B_i(m)|}}.$$

\begin{thm}\label{dens}{\em For $n\geq 0$ we have that
$$\frac{1}{c(q)}\int_{-\nu}^{\nu}x^{n}E_{q^{2}}^{\frac{-q^2x^2}{[2]_q}}d_{q}x=
|M[n], \omega |.$$}
\end{thm}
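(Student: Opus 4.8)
The plan is to reduce the identity to a single recursion satisfied by both sides. Writing $g(x)=E_{q^{2}}^{-q^{2}x^{2}/[2]_q}$ and $M_n=\int_{-\nu}^{\nu}x^{n}g(x)\,d_qx$, I would first dispose of the odd case: for $n$ odd the integrand $x^{n}g(x)$ is an odd function, since $g$ is even, so the last identity of the preceding proposition gives $M_n=\int_{0}^{\nu}(1+(-1)^n)x^{n}g(x)\,d_qx=0$; on the combinatorial side there are no perfect matchings of an odd set, so $|M[n],\omega|=0$ as well. It then suffices to treat $n=2k$ and to show that both $\tfrac{1}{c(q)}M_{2k}$ and $|M[2k],\omega|$ obey $X_{2k}=[2k-1]_q\,X_{2k-2}$ with $X_0=1$; this forces both to equal the $q$-double factorial $[2k-1]_q[2k-3]_q\cdots[1]_q$, recovering the advertised $q$-analogue of the double factorial.

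For the analytic side I would establish the recursion by $q$-integration by parts. The key input is the $q$-differential equation $\partial_q g(x)=-q^{2}\,x\,g(qx)$, which I would read off from the explicit series for $g$ by computing the ratio of consecutive coefficients and using the elementary simplification $[2m+2]_q/[m+1]_{q^{2}}=1+q$. Feeding this into the $q$-Leibniz rule and the fundamental theorem of $q$-calculus with $f(x)=x^{2k-1}$, and then using the rescaling behaviour of the Jackson integral to rewrite each resulting $\int x^{j}g(qx)\,d_qx$ in terms of the moments $M_j$, produces the recursion up to boundary contributions evaluated at $\pm\nu$.

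The main obstacle is exactly these boundary terms: a priori they involve $\nu^{2k\pm1}g(\nu)$ and do not obviously vanish. I expect the whole point of the normalization $\nu=1/\sqrt{1-q}$ to be that they cancel: using $(1-q)\nu^{2}=1$ together with $[2k-1]_q(1-q)=1-q^{2k-1}$, the total coefficient of $\nu^{2k-1}g(\nu)$ collapses to $-(1-q^{2k-1})-q^{2k-1}+1=0$. Checking this cancellation cleanly — and thereby seeing why $\nu$ must be chosen as it is — is the delicate step of the analytic half, after which $M_{2k}=[2k-1]_q\,M_{2k-2}$; since $c(q)=M_0$, this yields $\tfrac{1}{c(q)}M_{2k}=[2k-1]_q[2k-3]_q\cdots[1]_q$.

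For the combinatorial side I would prove the same recursion by deleting the arc through the point $1$. As $1$ is minimal we have $1=a_1$, matched to some $b_1=p\in\{2,\dots,2k\}$; deleting $1$ and $p$ and relabelling order-preservingly yields a matching $m'$ on $[2k-2]$. Since $B_1(m)=\varnothing$, the first arc contributes the weight $q^{|((1,p))|}=q^{p-2}$. The crux is that every later arc keeps its weight under the deletion: the point $1$ never lies inside another arc, while $p=b_1$ lies in every $B_i(m)$ with $i\ge 2$, so it is excluded from the weight exponent of each remaining arc in $m$ exactly as it is absent from the ground set of $m'$; hence the exponent of arc $i$ in $m$ equals that of the corresponding arc of $m'$, giving $\omega(m)=q^{p-2}\,\omega(m')$. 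Summing over the $2k-1$ choices of $p$ yields $\sum_{m\mapsto m'}\omega(m)=(1+q+\cdots+q^{2k-2})\,\omega(m')=[2k-1]_q\,\omega(m')$, and summing over $m'$ gives $|M[2k],\omega|=[2k-1]_q\,|M[2k-2],\omega|$, matching the analytic recursion and completing the proof.
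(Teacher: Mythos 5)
Your proof is correct, but it cannot coincide with ``the paper's proof'' for a simple reason: the paper offers no proof of Theorem~\ref{dens} at all. The result is imported from \cite{ED1} (``Let us now describe \cite{ED1} the interpretation\dots''), and the surrounding text only records consequences: $|M[2n+1],\omega|=0$ trivially, $|M[2n],\omega|=[2n-1]_q!!$ ``by induction'', and then the resulting moment formulas. Your double-recursion argument is therefore a genuinely independent, self-contained route, and it checks out. The $q$-differential equation $\partial_q g(x)=-q^{2}x\,g(qx)$ for $g(x)=E_{q^{2}}^{-q^{2}x^{2}/[2]_q}$ is correct (the coefficient comparison rests on $[2m+2]_q=(1+q)[m+1]_{q^{2}}$, as you say). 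Applying the $q$-Leibniz rule to $F(x)=x^{2k-1}g(x)$, the telescoping fundamental theorem of $q$-calculus over $[-\nu,\nu]$, and the rescaling identity $\int_{-\nu}^{\nu}x^{2k}g(qx)\,d_qx=q^{-2k-1}\bigl(M_{2k}-2(1-q)\nu^{2k+1}g(\nu)\bigr)$ gives $2\nu^{2k-1}g(\nu)=[2k-1]_qM_{2k-2}-M_{2k}+2(1-q)\nu^{2k+1}g(\nu)$; since $(1-q)\nu^{2}=1$ the two boundary terms cancel identically, so $M_{2k}=[2k-1]_qM_{2k-2}$. Thus your key claim --- that the choice $\nu=(1-q)^{-1/2}$ is exactly what kills the boundary contributions --- is right, even though your displayed bookkeeping of the cancellation is organized differently from this arrangement. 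The combinatorial half is also sound: $1$ lies inside no other arc, and $p=b_1$ belongs to every $B_i(m)$ for $i\geq 2$, so deleting the arc $(1,p)$ preserves every later weight exponent, whence $\omega(m)=q^{p-2}\omega(m')$ and $|M[2k],\omega|=[2k-1]_q\,|M[2k-2],\omega|$. With $M_0=c(q)$ and the empty matching of weight $1$, both sides equal $[2k-1]_q[2k-3]_q\cdots[1]_q$. What your approach buys is an explanation of \emph{why} $\nu$ must be $(1-q)^{-1/2}$ and a single mechanism proving both sides at once; what the paper's (cited) route buys is the closed-form moments stated directly, at the cost of deferring the actual argument to \cite{ED1}.
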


Since there are no matchings for a set of odd
cardinality we have that  $$|M[2n+1], \omega|=0.$$

One can show by induction that
$$|M[2n], \omega|=[2n-1]_q!!=[2n-1]_q[2n-3]_q.....[3]_q.$$
Therefore we have that
\begin{itemize}
\item $\frac{1}{c(q)}\des \int_{-\nu}^{\nu}x^{2n+1}E_{q^{2}}^{\frac{-q^2x^2}{[2]_q}}d_{q}x=0.$

\item $\frac{1}{c(q)}\des \int_{-\nu}^{\nu}x^{2n}E_{q^{2}}^{\frac{-q^2x^2}{[2]_q}}d_{q}x=
[2n-1]_q!!.$
\end{itemize}

The $q$-combinatorial interpretation of the Gaussian $q$-measure
is the starting point for our construction of $q$-measures of the
Jackson-Feynman type in \cite{ED2, ED1}. It would be interesting
to study the categorical analogues of these $q$-measures along the
lines of \cite{Blan2, RDEP}. The reader
should note that the formula above provides a $q$-integral
representation the $q$-analogue of
the Pochhammer $k$-symbol with $k=2$. A $q$-integral
representation for the general Pochhammer $q,k$-symbol is treated
in
\cite{CTT}. The integral representation of the Pochhammer
$k$-symbol is studied in
\cite{DP}.
\section{Gaussian $q$-distribution}

Let us study how probabilities are distributed on the real line
according to the Gaussian $q$-distribution.

\begin{prop}{\em For $0 \leq a < b \leq \nu$ we have

$$\frac{1}{c(q)}\int_{a}^{b}E_{q^{2}}^{\frac{-q^2t^2}{[2]_q}}d_qt=
\frac{1-q}{c(q)} \sum_{n=0}^{\infty}\frac{q^{n(n+1)} (q-1)^n
}{(1-q^{2n+1}) (1-q^2)_{q^2}^{n}}(b^{2n+1} - a^{2n+1} ).$$}
\end{prop}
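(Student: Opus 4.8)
The plan is to reduce the claim to an integral over $[0,b]$, expand the Gaussian $q$-density as a power series, and integrate monomial by monomial. First I would invoke the additivity property (identity 5 of the first Proposition) to write
$$\int_a^b E_{q^2}^{\frac{-q^2t^2}{[2]_q}} d_q t = \int_0^b E_{q^2}^{\frac{-q^2t^2}{[2]_q}} d_q t - \int_0^a E_{q^2}^{\frac{-q^2t^2}{[2]_q}} d_q t,$$
so that it suffices to evaluate $\des\int_0^b E_{q^2}^{\frac{-q^2t^2}{[2]_q}} d_q t$ for an arbitrary $0 \le b \le \nu$. Into this I would substitute the power series expansion
$$E_{q^2}^{\frac{-q^2t^2}{[2]_q}} = \sum_{n=0}^\infty \frac{q^{n(n+1)}(q-1)^n}{(1-q^2)_{q^2}^n} t^{2n}$$
recorded earlier in the text.

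The heart of the computation is the $q$-integral of a single monomial. Using identity 1 of the first Proposition, $\des\int_0^b t^{2n} d_q t = (1-q)b \sum_{m=0}^\infty q^m (q^m b)^{2n}$, and factoring out $b^{2n+1}$ leaves the geometric series $\des\sum_{m=0}^\infty q^{(2n+1)m} = \frac{1}{1-q^{2n+1}}$, which converges since $0 \le q < 1$. Hence
$$\int_0^b t^{2n} d_q t = \frac{(1-q)\, b^{2n+1}}{1-q^{2n+1}}.$$
Granting for the moment that the power series may be integrated term by term, substituting this in gives
$$\int_0^b E_{q^2}^{\frac{-q^2t^2}{[2]_q}} d_q t = (1-q)\sum_{n=0}^\infty \frac{q^{n(n+1)}(q-1)^n}{(1-q^{2n+1})(1-q^2)_{q^2}^n}\, b^{2n+1},$$
and subtracting the analogous expression for $a$ and dividing by $c(q)$ yields the stated identity.

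The one step that genuinely requires care is the interchange of the two infinite summations, namely the power-series sum over $n$ against the Jackson-integral sum over $m$ hidden in $\des\int_0^b$. I would justify this by a Tonelli-type argument for nonnegative double series applied to the absolute values of the terms. The crucial estimate is that, since $0 \le q^m b \le \nu$ and $\nu^2 = (1-q)^{-1}$, one has $(1-q)^n (q^m b)^{2n} \le (1-q)^n \nu^{2n} = 1$; moreover $1/(1-q^2)_{q^2}^n$ is bounded above by the finite constant $1/\prod_{j\ge 1}(1-q^{2j})$. Thus each inner sum over $n$ is dominated by a constant times $\sum_{n} q^{n(n+1)} \le \sum_n q^n < \infty$, uniformly in $m$, and the outer geometric sum over $m$ then converges. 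This establishes absolute convergence of the double series and legitimizes the term-by-term integration; the super-exponential weight $q^{n(n+1)}$ (which makes $E_{q^2}^{-q^2t^2/[2]_q}$ entire) is exactly what makes the bound available. With the interchange in hand, the remaining manipulations are the routine algebraic simplifications indicated above.
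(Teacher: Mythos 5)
Your proposal is correct and follows essentially the same route as the paper: the paper's proof plugs the power series for $E_{q^{2}}^{-q^{2}t^{2}/[2]_q}$ directly into the Jackson sum defining $\int_a^b$ (which is exactly your $\int_0^b - \int_0^a$ decomposition unwound), swaps the two summations, and sums the geometric series $\sum_m q^{(2n+1)m} = \frac{1}{1-q^{2n+1}}$. The only difference is that you explicitly justify the interchange of sums by absolute convergence, using the bound $(1-q)^n(q^mb)^{2n}\leq 1$ and the positivity of $\prod_{j\geq 1}(1-q^{2j})$, a step the paper performs silently; this is a worthwhile supplement but not a different argument.
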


\begin{proof}
\begin{eqnarray*}
\int_{a}^{b}E_{q^{2}}^{\frac{-q^2t^2}{[2]_q}}d_qt &=&
(1-q)\sum_{m=0}^{\infty}q^m(bE_{q^{2}}^{\frac{-q^2(q^mb)^2}{[2]_q}}-
aE_{q^{2}}^{\frac{-q^2(q^ma)^2}{[2]_q}})\\
&=&(1-q)\sum_{m,n=0}^{\infty}\frac{q^{n(n+1)} (q-1)^n q^{(2n+1)m}
}{ (1-q^2)_{q^2}^{n}}(b^{2n+1} - a^{2n+1} )\\
&=& (1-q)\sum_{n=0}^{\infty}\frac{q^{n(n+1)} (q-1)^n
}{(1-q^{2n+1}) (1-q^2)_{q^2}^{n}}(b^{2n+1} - a^{2n+1} ).
\end{eqnarray*}
\end{proof}

The reader may wonder about the convergence of the
series on the right hand side of the formula from the statement of
the previous theorem. Indeed the factors $(a^{2n+1} - b^{2n+1} )$
may suggest divergency, note however that the factors $q^{n(n+1)}$
ensure convergency.

\begin{defn}{\em
The Gaussian $q$-distribution $G_q:\mathbb{R}
\longrightarrow \mathbb{R}$ is given by
\[  G_q(x) =
\left\{
\begin{array}{lc}
  0 & \mbox{for}\ x < -\nu, \\
  & \\
  {\displaystyle \frac{1}{c(q)}\int_{-\nu}^{x}E_{q^{2}}^{\frac{-q^2t^2}{[2]_q}}d_qt} &
  \mbox{\ for\ }-\nu
\leq x \leq \nu \\
& \\
  1 &  \mbox{for}\ x > \nu
\end{array}
\right.
\]
}
\end{defn}

Below we need the following notation, if $A \subseteq
\mathbb{R}$ then as usual we define the characteristic function
$1_A$ as follows:
\[
1_A(x)=\left\{
\begin{array}{lc}
  0 & \mbox{for \ } x \mbox{\ not in  } \ A , \\

& \\
  1 & \mbox{for \ } x \mbox{\ in  } \ A.
\end{array}
\right.
\]

Next result provides explicit formulae for the Gaussian
$q$-distribution.

\begin{thm} {\em
For $x \in \mathbb{R}$ we have that:
$$G_q(x)=1_{[-(1-q)^{-1/2}, (1-q)^{-1/2} ]}(x)\left( \frac{1}{2} +
\frac{1-q}{c(q)} \sum_{n=0}^{\infty}\frac{q^{n(n+1)}
(q-1)^n  }{(1-q^{2n+1}) (1-q^2)_{q^2}^{n}}x^{2n+1}\right) +
1_{((1-q)^{-1/2}, \infty)}(x).$$ }
\end{thm}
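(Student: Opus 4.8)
The plan is to read off the claimed identity as a case analysis in $x$, matched against the three branches in the definition of $G_q$. Since $\nu=(1-q)^{-1/2}$, the two indicators on the right select exactly the three regimes of the definition. If $x<-\nu$ both $1_{[-\nu,\nu]}(x)$ and $1_{(\nu,\infty)}(x)$ vanish, so the right-hand side is $0=G_q(x)$; if $x>\nu$ the first indicator vanishes while the second equals $1$, giving $1=G_q(x)$. This reduces everything to the range $-\nu\le x\le\nu$, where I must show that $\frac{1}{c(q)}\int_{-\nu}^{x}E_{q^{2}}^{-q^2t^2/[2]_q}\,d_qt$ equals $\frac12$ plus the displayed series.

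For the middle range I would split the integral using additivity (Property $5$ of the $q$-integration proposition), writing $\int_{-\nu}^{x}=\int_{-\nu}^{0}+\int_{0}^{x}$. The first piece produces the constant $\frac12$: the integrand $E_{q^{2}}^{-q^2t^2/[2]_q}$ is an even function of $t$, so by Property $4$ (or Property $6$) one has $\int_{-\nu}^{0}E_{q^{2}}^{-q^2t^2/[2]_q}\,d_qt=\int_{0}^{\nu}E_{q^{2}}^{-q^2t^2/[2]_q}\,d_qt=\tfrac12\int_{-\nu}^{\nu}E_{q^{2}}^{-q^2t^2/[2]_q}\,d_qt=\tfrac{c(q)}{2}$, the last equality being the very definition of $c(q)$. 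Dividing by $c(q)$ yields the summand $\frac12$. The second piece $\int_{0}^{x}$ is handled by the preceding proposition with $a=0$ and $b=x$, which after division by $c(q)$ gives precisely $\frac{1-q}{c(q)}\sum_{n\ge0}\frac{q^{n(n+1)}(q-1)^n}{(1-q^{2n+1})(1-q^2)_{q^2}^{n}}x^{2n+1}$; convergence is guaranteed by the $q^{n(n+1)}$ factors, as already noted after that proposition.

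The step I expect to be the main obstacle is that the preceding proposition is stated only for a positive interval $0\le a<b\le\nu$, whereas the splitting above requires $\int_{0}^{x}$ for $x$ in the full range $[-\nu,\nu]$, including negative $x$. I would therefore re-derive the series directly from the Jackson integral, noting that $\int_{0}^{b}f\,d_qx=(1-q)b\sum_{m\ge0}q^m f(q^m b)$ is valid for arbitrary real $b$; expanding $E_{q^{2}}^{-q^2(q^mb)^2/[2]_q}$ in powers of $(q^m b)^2$ and summing the geometric series in $m$ reproduces the same formula with $b^{2n+1}$ for any sign of $b$. The resulting series is an odd function of $x$, which is exactly the behaviour needed so that the single expression $\frac12+(\text{odd series})$ covers both $[0,\nu]$ and $[-\nu,0]$. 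Finally I would verify boundary consistency at $x=\pm\nu$: the identity $\int_{0}^{\nu}E_{q^{2}}^{-q^2t^2/[2]_q}\,d_qt=\tfrac{c(q)}{2}$ forces the series to equal $+\tfrac12$ at $x=\nu$ and $-\tfrac12$ at $x=-\nu$, so the right-hand side reads $1$ and $0$ respectively, agreeing with $G_q(\nu)=1$ and $G_q(-\nu)=0$.
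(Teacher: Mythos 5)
Your proposal is correct and follows essentially the same route as the paper: the paper's own (very terse) proof likewise splits the integral at $0$, obtains the constant $\tfrac{1}{2}$ from the symmetry of $s_q$ about the origin, and invokes the preceding proposition for the series. The only divergence is your treatment of negative $x$ --- the paper reduces $\int_{x}^{0}$ to $\int_{0}^{-x}$ via the evenness of the integrand and the reflection property of the $q$-integral, whereas you re-derive the Jackson series directly for a negative upper limit and appeal to the oddness of the resulting series; both are valid, and the obstacle you flag (the proposition being stated only for $0\le a<b\le\nu$) is exactly the point the paper's symmetry argument is meant to handle.
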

\begin{proof}
The result follow from the previous proposition, the fact that
$s_q(x)$ is symmetric about the origin, and the straightforward
set theoretical identities: $$[-\nu,x]\sqcup [x,0]=[-\nu,0]
\mbox{ \ \  for \ \ } -\nu
\leq x \leq0;$$ $$[-\nu,x]= [-\nu, 0]\sqcup [0,x]\mbox{ \ \ for \ \  } 0 \leq x
\leq \nu.$$
\end{proof}

\begin{thm}{\em The Gaussian $q$-distribution interpolates between the
uniform distribution on the interval $[-1,1]$ and the Gaussian
distribution on the real line.}
\end{thm}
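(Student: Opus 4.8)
The plan is to establish the two boundary limits separately, relying on the explicit formula for $G_q$ recorded just above together with the behavior of the endpoint $\nu(q)$ and the normalization $c(q)$ already established in the excerpt, namely $\nu(0)=1$, $\nu(q)\to\infty$, $c(0)=2$ and $c(q)\to\sqrt{2\pi}$.

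First I would treat the limit $q\to 0$. Since $\nu=(1-q)^{-1/2}\to 1$, the indicator $1_{[-\nu,\nu]}$ converges to $1_{[-1,1]}$ and $1_{(\nu,\infty)}$ to $1_{(1,\infty)}$. In the series $\sum_{n\geq 0}\frac{q^{n(n+1)}(q-1)^n}{(1-q^{2n+1})(1-q^2)_{q^2}^{n}}x^{2n+1}$ the factor $q^{n(n+1)}$ tends to $0$ for every $n\geq 1$ and equals $1$ for $n=0$, so in the limit only the $n=0$ term survives. A direct computation shows that term, after multiplication by the prefactor $\frac{1-q}{c(q)}$, equals $\frac{x}{c(q)}$, which tends to $\frac{x}{2}$ because $c(0)=2$. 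Hence the bracket tends to $\frac{1}{2}+\frac{x}{2}=\frac{x+1}{2}$, and
\[ G_q(x)\longrightarrow \tfrac{x+1}{2}\,1_{[-1,1]}(x)+1_{(1,\infty)}(x), \]
which is exactly the cumulative distribution function of the uniform distribution on $[-1,1]$.

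For the limit $q\to 1$ I would work instead with the integral form $G_q(x)=\frac{1}{c(q)}\int_{-\nu}^{x}E_{q^{2}}^{\frac{-q^2t^2}{[2]_q}}\,d_qt$ valid for $x\in[-\nu,\nu]$, which is more convenient here than the series. Fix $x\in\mathbb{R}$; since $\nu\to\infty$ we eventually have $x\in[-\nu,\nu]$, so the indicator terms reduce to $1_{[-\nu,\nu]}(x)=1$ and $1_{(\nu,\infty)}(x)=0$. As $q\to 1$ the Jackson $q$-integral converges to the Riemann integral, the integrand converges pointwise to $e^{-t^2/2}$ (using $E_q^{y}\to e^{y}$ together with $q^2\to 1$ and $[2]_q\to 2$), the lower limit $-\nu\to-\infty$, and $c(q)\to\sqrt{2\pi}$. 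Collecting these facts gives
\[ G_q(x)\longrightarrow \frac{1}{\sqrt{2\pi}}\int_{-\infty}^{x}e^{-t^2/2}\,dt, \]
the standard Gaussian distribution function, which completes the interpolation.

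The only nonroutine point, and the step I expect to be the main obstacle, is justifying the interchange of the limit $q\to 1$ with the infinite Jackson sum defining the improper integral. This requires a dominated-convergence type estimate that is uniform in $q$ near $1$; the Gaussian-type decay supplied by the factors $q^{n(n+1)}$ (equivalently, by $E_{q^{2}}$), already flagged in the remark following the preceding proposition, furnishes the needed uniform tail bound, so the interchange is legitimate. Every other step is termwise passage to the limit using quantities whose behavior was computed earlier in the paper.
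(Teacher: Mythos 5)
Your proposal is correct and takes essentially the same route as the paper: the paper's own proof consists of one line declaring both limits ``immediate from our previous considerations,'' i.e.\ from the explicit formula for $G_q$, the limits $\nu(0)=1$, $\nu(q)\to\infty$, $c(0)=2$, $c(q)\to\sqrt{2\pi}$, and the behavior of the $q$-exponentials, which is precisely what you carry out in detail. If anything, your write-up is more careful than the paper's, both in flagging the limit--sum interchange needed at $q\to 1$ and in correctly retaining the term $1_{(1,\infty)}(x)$ in the $q\to 0$ limit (the paper writes the limit as $\frac{1+x}{2}1_{[-1,1]}$, omitting the value $1$ for $x>1$).
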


\begin{proof}
We must show that $G_q(x)$ converges to
$$\frac{1+x}{2}1_{[-1,1]}$$ as $q$ goes to $0$; and  converges  to
$$\frac{1}{\sqrt[]{2\pi}}\int_{-\infty}^{x} e^{-\frac{t^2}{2}}dt$$ as
$q$ approaches $1$. Both results are immediate from our previous
considerations.
\end{proof}

{\noindent} The transition of the Gaussian $q$-Distribution from
$\frac{1+x}{2}1_{[-1,1]}$ to the Gaussian distribution as $q$ moves
from $0$ to $1$ is shown in Figure \ref{qcampana3d2}.

\begin{figure}[h!]
  \centering
    \includegraphics[width=.8\textwidth]{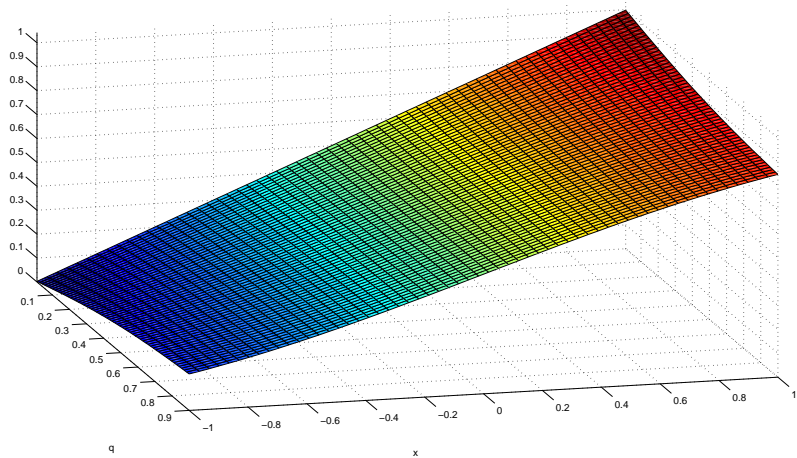}
    \caption{Plot of the Gaussian $q$-distribution for $-1 \leq x \leq 1$.}
    \label{qcampana3d2}
\end{figure}

\section{Conclusion}

The countless applications of the Gaussian measure in mathematics, science and engineering,
suggest that the Gaussian $q$-measure may also find its share of
applications. We showed that as $q$ moves from $0$ to $1$ the
Gaussian $q$-density and the Gaussian $q$-distribution interpolate
between the uniform density and the uniform distribution on the
interval $[-1,1]$ to the Gaussian density and the Gaussian
distribution. Note that the transition from specialization to
uniformity is a common phenomena both in nature and in
mathematics. Indeed, we are used the see objects breaking apart but
we seldom see them coming back together to form a unity from the
many pieces. Likewise in mathematics the transfer of heat in a
compact manifold will eventually end up with a uniform temperature
trough out the manifold, regardless of the fact that the initial
distribution of heat many have been localized around some point.
The reverse transition form uniformity to specialization occurs
less often, yet it is  a standard phenomena in certain domains of
nature, phenomena of such type  play a most fundamental role in
some chemical interactions and in microbiology. For that reason we
believe that our Gaussian $q$-measure may find some applications
in those fields of study.

\subsection*{Acknowledgment}

Our thanks to Nicol\'as Vega and Camilo Ortiz for assisting us with
the figures which were drawn using the computer software MATLAB.

\noindent ragadiaz@gmail.com\\
\noindent Facultad de Administraci\'on,
Universidad del Rosario, Bogot\'a, Colombia \\

\noindent epariguan@javeriana.edu.co \\
Departamento de Matem\'aticas, Pontificia Universidad Javeriana,
Bogot\'a, Colombia.

\end{document}